\theoremstyle{plain}
\newtheorem{thm}{Theorem}
\newtheorem{lem}[thm]{Lemma}
\newtheorem{prop}[thm]{Proposition}
\newtheorem{thmprf}[thm]{Theorem}
\theoremstyle{plain}
\newtheorem{example}[thm]{Example}
\theoremstyle{break}
\newtheorem{defi}[thm]{Definition}
\theoremstyle{nonumberplain}
\newtheorem{proof}{Proof}
\newcommand{\op}[1]{\operatorname{#1}}
\newcommand{\N}{\mathbb{N}}
\renewcommand{\phi}{\varphi}
\renewcommand{\theta}{\vartheta}
\newcommand{\<}{\langle}
\renewcommand{\>}{\rangle}
\newcommand{\al}{\alpha}
\newcommand{\be}{\beta}
\newcommand{\DD}[1]{\mathbb{D}_{2^{#1}}}
\newcommand{\QFSL}{\mathit{QFSL}}
\renewcommand{\|}{\,|\,}
\newcommand{\StarN}{{}^{*}\mathbb{N}}
\newcommand{\Star}[1]{{}^{*}{#1}}
\newcommand{\restr}{\upharpoonright}
\newcommand{\std}{{}^{\circ}}
\newcommand{\C}{\mathcal{C}}
\newcommand{\Bcal}{\mathcal{B}}
\renewcommand{\epsilon}{\varepsilon}
\newcommand{\SL}{\mathit{SL}}
\newcommand{\overp}{\overline{p}}
\newcommand{\overtau}{\overline{\tau}}
\newcommand{\vpt}{v^{\overp,\overtau}}
\newcommand{\SDL}{\op{SDL}}
\newcommand{\jpt}{j^{\overp,\overtau}}
\newcommand{\PSpec}{\op{PSpec}}
\newcommand{\Pcal}{\mathcal{P}}
\title{Representation Theorems for Strong Predicate Exchangeability in Pure Inductive Logic}
\author{Malte S. Klie{\ss}\\
\small{malte@kappaplus.de}}
\date{\today}
\begin{document}
\maketitle
\begin{abstract}
	\noindent In Pure Inductive Logic, the principle of Strong Predicate Exchangeability is a rational principle based on symmetry that sits in between the principles of Predicate Exchangeability and Atom Exchangeability. We will show a de Finetti -- style representation theorem for probability
	functions that satisfy this principle in addition to Unary Language Invariance.
\end{abstract}

Key words: Strong Predicate Exchangeability, Language Invariance, Inductive Logic, Probability Logic, Uncertain Reasoning.

\section{Introduction}

A recurring theme in the study of Pure Inductive Logic
is symmetry.
Amongst such rational principles based on symmetry are the well-known principles of Atom and Predicate Exchangeability\footnote{In the case of Predicate Exchangeability, we would like to point out that while the principle, in various formulations, was accepted as a basic condition for probability functions by both Rudolf Carnap and W.E. Johnson, the principle only recently was studied in more detail, providing representation theorems.}, respectively.

The rational principle of Strong Predicate Exchangeability arose from a generalized version
of the $u^{\overp}$ functions (see \cite[Chapter 29]{ParisVencovskaBook}) that are the building blocks\footnote{By `building
blocks' we mean that any function satisfying some rational principle can be represented as a convex combination of the building block functions.} of functions
satisfying the principles of Atom Exchangeability and Unary Language Invariance. In fact,
these generalized functions turned out to be just too strong to use as building blocks for
Unary Language Invariance.

With this in mind, and especially with the rather artificial motivating example above, one might think that Strong Predicate Exchangeability is a rather
artificial principle and thus might only be of interest as a technical exercise. There is
however an alternative way of obtaining Strong Predicate Exchangeability via a weak
form of Johnson's Sufficientness Postulate (JSP): just as JSP gives rise to Carnap's Continuum of Inductive Methods whose members satisfy Atom Exchangeability, so can we obtain a
collection of functions arising from a weaker form of JSP that are characterized by Strong
Predicate Exchangeability (see \cite{KliessThesis}[Chapter 3.3]). We would then argue
that while there may be no compelling argument for a rational agent to adopt Strong Predicate Exchangeability as a principle in its own right, it would be a consequence for any agent
picking this weaker form of JSP.

The aim of this paper is to present de Finetti -- style representation theorems for
probability functions satisfying Strong Predicate Exchangeability in conjunction with Unary
Language Invariance. We will show that any probabiliy function satisfying Strong Predicate
Exchangeability and Unary Language Invariance can be represented as a convex combination of
functions that in addition also satisfy Weak Irrelevance.

\section{Notation and Principles}

The context of this paper is unary Pure Inductive Logic. We will be concerned
with first order languages that only contain finitely many predicate symbols $P_i$
and countably many constant symbols $a_1,a_2,\dotsc,a_n,\dotsc$ which we can think
of as exhausting the universe. For $k\in\N^{+} = \{n\in\N\| n > 0\}$, let $L_k$ be the
language containing only the predicates $P_1,P_2,\dotsc,P_k$. Unless otherwise stated
we will take our default language $L$ to be the language $L_q$. Let $\QFSL$ denote the
set of quantifier-free sentences of $L$ and $\SL$ the set of sentences of $L$.

	Let $L=L_q$. An \emph{atom} of $L$ is a formula
	$$\al(x) = \bigwedge_{i=1}^q \pm P_i(x),$$
	where $+ P_i(x)$, $- P_i(x)$ stand for $P_i(x)$, $\neg P_i(x)$, respectively.
	Note that for $L=L_q$, there are $2^q$ atoms, denoted $\al_1,\al_2,\dotsc,\al_{2^q}$,
	which we will assume to be in the usual lexicographic ordering.\footnote{In the usual lexicographic ordering, $\al_1 = \bigwedge_{i=1}^q P_i(x)$, $\al_2 = \bigwedge_{i=1}^{q-1} P_i(x)\wedge \neg P_q(x)$, $\al_3 = \bigwedge_{i=1}^{q-2}P_i(x)\wedge\neg P_{q-1}(x)\wedge P_q(x)$, etc., up to $\al_{2^q}=\bigwedge_{i=1}^q\neg P_i(x)$.}
	
	A \emph{state description} of $L$ for $a_1,\dotsc,a_n$ is a sentence
	$$\Theta(a_1,\dotsc,a_n) = \bigwedge_{i=1}^n \al_{h_i}(a_i),$$
	where $\al_{h_i}$ is an $L$-atom for each $h_i$, i.e. $h_i\in\{1,\dotsc,2^q\}$ for each $1\leq i\leq n$.
	
	The following definition will turn out to be very useful in keeping track of the number of negations occurring in an atom.

\begin{defi}
	Let $L=L_q$. Define the function $\gamma_q$ on the atoms of $L$ by
	\begin{gather*}
		\gamma_q(\al) = k \Leftrightarrow \al\text{ contains $k$ negated predicates.}
	\end{gather*}
	We shall drop the index $q$ whenever it is understood from the context.
\end{defi}

\begin{defi}
	Let $L=L_q$ and let $w:\SL\rightarrow[0,1]$. Then $w$ is a probability function
	if $w$ satisfies the following properties.

	\begin{itemize}
		\item[(i)] $w(\top) = 1$,
		
		\item[(ii)] if $\phi\models\neg\theta$, then $w(\phi\vee\theta)= w(\phi)\cdot w(\theta)$,
		
		\item[(iii)] $w\left(\exists x\,\phi(x)\right) = \lim_{n\rightarrow\infty}w\left(\bigvee_{i=1}^n \phi(a_i)\right)$.
		
	\end{itemize}
\end{defi}

Recall that by Gaifman's Theorem (see \cite{Gaifman}) any probability function $w:\QFSL\rightarrow[0,1]$ satisfying
(i) and (ii) above can uniquely be extended to a function $w$ on $\SL$
satisfying (i)-(iii). By the Disjunct Normal Form Theorem it will be enough to define a function
$w$ on state descriptions of $L$.

In the following definitions for rational principles we will assume that $L=L_q$ and
$w$ is a probability function on $\SL$.

{\bf Constant Exchangeability (Ex)}\\
	$w$ satisfies Constant Exchangeability if for all $L$-sentences $\phi$ and all permutations $\sigma$ of $\N^{+}$,
	$$w(\phi(a_1,\dotsc,a_n)) = w(\phi(a_{\sigma(1)},\dotsc,a_{\sigma(n)})).$$

We will assume that all probability functions satisfy Constant Exchangeability.

{\bf Predicate Exchangeability (Px)}\\
	$w$ satisfies
	Px if for all $L$-sentences $\phi$ and all permutations $\sigma$ of the (indices of)
	predicates of $L$,
	$$ w(\phi(P_1,\dotsc,P_m,a_1,\dotsc,a_n)) = w(\phi(P_{\sigma(1)},\dotsc,P_{\sigma(m)},a_1,\dotsc,a_n)).$$

{\bf Atom Exchangeability (Ax)}\\
	$w$ satisfies Atom Exchangeability if for all permutations $\sigma$ of the (indices of)
	atoms of $L$,
	$$ w\left(\bigwedge_{i=1}^n\al_{h_i}(a_i)\right) = w\left(\bigwedge_{i=1}^n\al_{\sigma(h_i)}(a_i)\right).$$

{\bf Weak Irrelevance Principle (WIP)}\\
	$w$ satisfies Weak Irrelevance if whenever $\theta, \phi\in\QFSL$ have no constants
	or predicates in common then
	$$w(\theta\wedge\phi) = w(\theta)\cdot w(\phi).$$

{\bf Unary Language Invariance (ULi)}\\
	Let $w$ be a probability function on $L$.
	Then $w$ satisfies Unary Language Invariance if there is a family of probability functions
	$w^{\mathcal{L}}$, one for each finite unary language $\mathcal{L}$ and satisfying Px + Ex
	such that $w^L = w$ and whenever $\mathcal{L}\subseteq\mathcal{L}'$, then
	$w^{\mathcal{L}'}\restr S\mathcal{L} = w^{\mathcal{L}}$.
	
	We say that $w$ satisfies ULi \emph{with $\Pcal$} for a rational principle $\Pcal$, if each
	$w^{\mathcal{L}}$ in the ULi family also satisfy $\Pcal$.

We will now proceed to define the $P$-spectrum and in turn the principle of Spectrum
Exchangeability, which is the main focus of this paper.

\begin{defi}[$P$-spectrum]
	Let $L=L_q$ and $\Theta(a_1,\dotsc,a_n) = \bigwedge_{i=1}^n \al_{h_i}(a_i)$ be a state description of $L$.
	We can view $\Theta$ as
	\begin{gather*}
		\Theta(a_1,\dotsc,a_n) = \bigwedge_{i=0}^q \bigwedge_{j=1}^{s_i} \al_{h_{ij}}(b_{ij}),
	\end{gather*}
	where each $\al_{h_{ij}}$ for $j\in\{1,\dotsc,s_i\}$ is an atom of $L$ with $\gamma_q(h_{ij}) = i$,
	$n = \sum_{i=0}^q s_i$ and $b_{ij} = a_k$ for that $k$ with  $\Theta(a_1,\dotsc,a_n)\models\al_{h_{ij}}(a_k)$. For each $i\in\{0,\dotsc,q\}$ let $E_i$ be the equivalence relation on $\{1,\dotsc,s_i\}$ given by
	\begin{gather*}
		j E_i k <=> h_{ij} = h_{ik}.
	\end{gather*}
	For $i\in\{0,\dotsc,q\}$ let $M_i$ be the multiset of sizes of equivalence classes of $E_i$. The \emph{$P$-spectrum} of $\Theta$ is the vector
	\begin{gather*}
		\<M_0,M_1,\dotsc,M_q\>.
	\end{gather*}
\end{defi}

\begin{defi}[\bf Strong Predicate Exchangeability, SPx]
Let $L=L_q$ and $w$ a probability function on $L$. $w$ satisfies Strong Predicate Exchangeability if and only if
$w(\Theta) = w(\Phi)$ whenever $\Theta$ and $\Phi$ have the same $P$-spectrum.
\end{defi}

SPx is a strong version of Px in the sense that it implies Px, but Px does not imply SPx.

\begin{prop}
	Let $w$ be a probability function on $L=L_q$ satisfying SPx. Then $w$ satisfies Px.
\end{prop}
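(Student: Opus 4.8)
The plan is to show that any permutation of the predicate symbols, when applied to a state description, preserves its $P$-spectrum; SPx then forces $w$ to be invariant under such permutations on state descriptions, and the general case follows by the usual reduction via disjunctive normal form and Gaifman's Theorem.

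First I would observe that a permutation $\sigma$ of the predicate indices induces a permutation of the atoms of $L$. Indeed, replacing each $P_i$ by $P_{\sigma(i)}$ in an atom $\al(x)=\bigwedge_{i=1}^q \pm P_i(x)$ again yields an atom, with the same collection of signs merely redistributed among the predicates. The crucial point is that this bijection on atoms leaves $\gamma_q$ invariant: permuting which predicate carries a given sign does not alter how many of the signs are negative. Hence $\sigma$ restricts, for each $i\in\{0,\dotsc,q\}$, to a permutation of the level $\{\al : \gamma_q(\al)=i\}$.

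Next I would apply this to a state description $\Theta(a_1,\dotsc,a_n)$ and to $\Theta^\sigma$, its image under $\sigma$. Writing $\Theta$ in the grouped form used in the definition of the $P$-spectrum, the permutation acts inside each level $i$ by relabelling the atoms occurring there: the block of constants realising a given atom $\beta$ becomes the block of constants realising the atom to which $\sigma$ sends $\beta$. Thus each equivalence class of $E_i$ is carried to an equivalence class of the same size, and since $\sigma$ acts bijectively on level $i$, the multiset $M_i$ of class sizes is unchanged for every $i$. Therefore $\Theta$ and $\Theta^\sigma$ have the same $P$-spectrum, and SPx yields $w(\Theta)=w(\Theta^\sigma)$.

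Finally, I would lift this equality from state descriptions to arbitrary sentences. Since the predicate permutation $\sigma$ commutes with the Boolean connectives, an $L$-sentence $\phi$ and its permuted form $\phi^\sigma$ have disjunctive normal forms that match up term-by-term under $\sigma$; additivity of $w$ (property (ii)) then propagates the equality on state descriptions to all of $\QFSL$, and Gaifman's Theorem extends it to $\SL$. This is exactly the statement of Px. I expect the only real content of the argument to be the $\gamma_q$-preservation step, from which invariance of the $P$-spectrum is immediate; the passage from state descriptions to general sentences is routine.
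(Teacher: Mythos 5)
Your proof is correct and takes essentially the same approach as the paper's: both show that a predicate permutation preserves the $P$-spectrum of a state description (by checking that the induced bijection on atoms respects the equivalence classes level by level) and then invoke SPx, with the passage to arbitrary sentences being the standard disjunctive-normal-form/Gaifman reduction. If anything, your explicit remark that the induced map on atoms preserves $\gamma_q$ spells out a point the paper leaves implicit.
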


\begin{proof}
	Let $\sigma$ be a permutation of predicates, $\Theta$ a state description of $L$ and
	$\sigma\Theta$ the result of permuting all predicates occuring in $\Theta$ according to $\sigma$.
	It is enough to show that $\Theta$ and $\sigma\Theta$ have the same $P$-spectrum.
	
	Suppose $\Theta = \bigwedge_{i=0}^q\bigwedge_{j=1}^{s_i} \al_{h_{ij}}(a_{ij})$ and for some $j,k\in\{1,\dotsc,n\}$, we have
	that $h_{ij}=h_{ik}$, so $j E_i k$ for some $i$. Since $\al_{h_{ij}} = \al_{h_{ik}}$ implies $\sigma\al_{h_{ij}} = \sigma\al_{h_{ik}}$,
	$\sigma$ preserves the $E_i$-equivalence of $j$ and $k$. A similar argument gives that if $j,k\in\{1,\dotsc,s_i\}$ are not $E_i$-equivalent, then
	$\sigma\al_{h_{ij}}\neq \sigma\al_{h_{ik}}$ must hold. Thus we must have that the $P$-spectrum of $\Theta$ is the same
	as the $P$-spectrum of $\sigma\Theta$. By SPx for $w$, $w(\Theta) = w(\sigma\Theta)$.
\end{proof}

The following counter-example will show that Px does not imply SPx.

\begin{example}[Px does not imply SPx]
	Suppose $\Theta(a_1,\dotsc,a_5)$, $\Phi(a_1,\dotsc,a_5)$\\
	are state descriptions of $L_3$ given by
	\begin{align*}
		\Theta &= (P_1\wedge P_2\wedge \neg P_3) \wedge (\neg P_1 \wedge P_2 \wedge P_3) \wedge (\neg P_1\wedge \neg P_2\wedge P_3)\\
			&\qquad \wedge (\neg P_1\wedge \neg P_2 \wedge P_3) \wedge (P_1\wedge \neg P_2\wedge \neg P_3),\\
		\Phi &= (P_1\wedge \neg P_2 \wedge P_3) \wedge (\neg P_1 \wedge P_2 \wedge P_3) \wedge (\neg P_1\wedge P_2\wedge \neg P_3)\\
			&\qquad \wedge (\neg P_1 \wedge P_2 \wedge \neg P_3) \wedge (\neg P_1 \wedge \neg P_2 \wedge P_3).
		\intertext{With the eight atoms of $L_3$ enumerated in lexicographic order, we obtain}		
		\Theta &= \al_2\al_4\al_5\al_7^2,\\
		\Phi &= \al_3\al_5\al_6^2\al_7.
	\end{align*}
	Note that there are six permutations of the atoms of $L_3$ that are induced by Px, and that $\al_2$, $\al_3$, $\al_5$ each have one negated predicate, while
	$\al_4$, $\al_6$, $\al_7$ each have two negated predicates. Suppose $\vec{b}\in\mathbb{D}_8$ and let $\Sigma$ denote the set
	of all permutations of predicates. Then one can verify easily that $w$ given by
	\begin{gather*}
		w = \frac{1}{6}\sum_{\sigma\in\Sigma} w_{\sigma\vec{b}}
	\end{gather*}
	satisfies Px. Notice that $\Phi$ and $\Theta$ have the same $P$-spectrum. We obtain
	\begin{align*}
		w(\Theta) &= \frac{1}{6}\left(b_2b_4b_5b_7^2 + b_3b_4b_5b_6^2 + b_2b_6b_3b_7^2 + b_5b_6b_3b_4^2 + b_5b_7b_2b_6^2 + b_3b_7b_2b_6^2\right),\\
		w(\Phi) &= \frac{1}{6}\left(b_3b_5b_7b_6^2 + b_2b_5b_6b_7^2 + b_5b_3b_7b_4^2 + b_2b_3b_4b_7^2 + b_3b_2b_6b_4^2 + b_5b_2b_6b_4^2\right).
		\intertext{Now letting}
		\vec{b} &= \left\<\frac{1}{19},\frac{2}{19},\frac{4}{19},\frac{5}{19},\frac{2}{19},\frac{3}{19},\frac{1}{19},\frac{1}{19}\right\>,
	\end{align*}
	we clearly have $\vec{b}\in\mathbb{D}_8$ and we obtain
	\begin{align*}
		w(\Theta) &= \frac{1094}{6\cdot 19^5},\\
		w(\Phi) &= \frac{1224}{6\cdot 19^5},
	\end{align*}
	which clearly gives $w(\Theta)\neq w(\Phi)$ and thus $w$ cannot satisfy SPx.
\end{example}

Similarly we can observe that any function satisfying Ax must also satisfy SPx, as we can easily
see that any permutation $\sigma$ of atoms such that $\gamma_q(\al) = \gamma_q(\sigma\al)$
preserves $P$-spectra. Conversely whenever $\Theta$ and $\Phi$ are state descriptions
with the same $P$-spectrum then there is a bijection between the atoms of $\Theta$ and those
of $\Phi$ that can be extended to a permutation of atoms. It is easy to see that there are functions
satisfying SPx, but not Ax as any permutation that permutes atoms with different numbers of negations
can be used to construct a counter-example in just the same fashion as the above example.

\section{A representation theorem for ULi with SPx}

The first step towards the desired representation theorem is defining the class of functions
$\vpt$ that we will show to be the basic building blocks for functions satisfying ULi with SPx.
It is apparent from the definition that these functions are generalizations of the $u^{\overp}$
functions that form the building blocks for Ax + ULi, see e.g. \cite{ParisVencovskaBook}.
As was discussed in \cite{KliessThesis} the classes of $\vpt$ and $u^{\overp}$ share a number
of properties, of which we will quote (without proof) the most relevant for our purposes.

Let $\overp$, $\overtau$ be countable sequences $\overp = \<p_0,p_1,p_2,\dotsc\>$, 
$\overtau = \<\tau_0,\tau_1,\tau_2,\dotsc\>$ and let $\Bcal$ be the set
\begin{multline*}
	\Bcal = \left\{\<\overp,\overtau\>\left|\vphantom{\sum}\right. p_i\geq 0\text{ for } i\geq 0, p_1\geq p_2\geq p_3\geq\dotsc\geq p_n\geq\dotsc, \sum_{i\geq 0} p_i = 1, \right.\\\left.\vphantom{\sum_{i\in\N}} \tau_j\in[0,1]\text{ for } j\geq 1 \text{ and } \tau_0 \text{ a normalized $\sigma$-additive measure on }[0,1]\right\}.
\end{multline*}

\begin{defi}\label{defvpt}
	Let $L=L_q$ and let $\<\overp,\overtau\>\in\Bcal$ and let $\Theta(\vec{a}) = \Theta(a_1,\dotsc,a_m)$ be a
	state description of $L$ with
	\begin{gather*}
		\Theta(a_1,\dotsc,a_m) = \bigwedge_{i=1}^m \al_{h_i}(a_i).
	\end{gather*}
	Define the functions $\jpt_L$, $\vpt_L$ as follows: Let $\vec{c}$ be a sequence in $\N$. Then
	\begin{align*}
		\jpt_L (&\Theta(\vec{a}),\<c_1,\dotsc,c_m\>)\\
		&= \begin{cases}
			\jpt_L(\Theta^{-},\<c_1,\dotsc,c_{m-1}\>) \cdot p_{c_m} \cdot b_{h_m,c_{m}}&\text{if $c_m = 0$ or $c_l\neq c_m$ for all $l<m$},\\
			{}&{}\\
			\jpt_L(\Theta^{-},\<c_1,\dotsc,c_{m-1}\>) \cdot p_{c_m} & \text{if $c_l=c_m\neq 0$ for some}\\
			&\text{$l<n$ and $h_l = h_m$,}\\
			{}&{}\\
			0&\text{otherwise,}
		\end{cases}
	\end{align*}
	with $\Theta^{-}=\Theta^{-}(a_1,\dotsc,a_{m-1})$ the unique state description such that
	\begin{gather*}
		\Theta(a_1,\dotsc,a_m) \models\Theta^{-}(a_1,\dotsc,a_{m-1})
		\intertext{and}
		b_{h_m,c_m} = \begin{cases}\tau_{c_m}^{\gamma_q(h_m)}(1-\tau_{c_m})^{\gamma_q(h_m)}&\text{if $c_m>0$,}\\
		\int_{[0,1]}x^{\gamma_q(h_m)} (1-x)^{q-\gamma_q(h_m)}\,d\tau_0(x)&\text{if $c_m$ = 0.}\end{cases}
	\end{gather*}
	Define $\vpt_L$ on state descriptions of $L$ by
	\begin{gather}
		\vpt_L(\Theta(\vec{a})) = \sum_{\vec{c}}\jpt_L(\Theta(\vec{a}),\vec{c}).\label{def_vpt}
	\end{gather}
\end{defi}

Note that this definition is slightly different than the original definition  given in
\cite{KliessThesis}: in the original definition we allowed even the $\tau_i$ for $i>0$ to
be normalized $\sigma$-additive measures instead of single point measures.

The following theorem lists some properties of the $\vpt$ functions. The proof is rather lengthy and technical, and in the case of Weak Irrelevance even requires a detour via a different definition
for the $\vpt$. As we believe the techniques required for the proof do not provide any benefit
for the task at hand, and for the sake of brevity, we would like to refer the reader to Chapter
3 of \cite{KliessThesis}, where detailed proofs for the claims are given.

\begin{thmprf}\label{thmvpt}
	Let $\<\overp,\overtau\>\in\Bcal$. Then $\vpt$ is a probability function satisfying 
	the principles Ex, SPx, ULi and WIP.
\end{thmprf}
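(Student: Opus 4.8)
The plan is to establish the five claims in sequence, building everything on one closed form for $\jpt_L$ obtained by unwinding the recursion. First I would prove, by induction on the number $m$ of constants, that for a colour sequence $\vec c$
\[
  \jpt_L(\Theta(\vec a),\vec c)=\prod_{i:\,c_i=0}\bigl(p_0\, b_{h_i,0}\bigr)\cdot\prod_{c}\bigl(p_c^{\,|S_c|}\, b_{h(c),c}\bigr),
\]
where $S_c=\{\,i:c_i=c\,\}$, the second product ranges over the positive colours $c$ occurring in $\vec c$, and $h(c)$ is the atom shared by the constants in $S_c$; this summand is $0$ unless every such $S_c$ is monochromatic. I would read $\vpt_L=\sum_{\vec c}\jpt_L(\cdot,\vec c)$ generatively: each constant is independently assigned colour $c$ with probability $p_c$; all constants of a fixed positive colour $c$ share one atom, obtained by negating each predicate independently with probability $\tau_c$; and each colour-$0$ constant independently draws $x\sim\tau_0$ and then negates each predicate independently with probability $x$. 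Every assertion below is most transparent in this reformulation.

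For the probability-function claim, nonnegativity is immediate as all factors $p_c,b_{h,c}$ are nonnegative. By Gaifman's theorem and the Disjunctive Normal Form theorem it then suffices to check on state descriptions that $\vpt_L(\top)=1$ and the marginalisation identity $\sum_{h}\vpt_L(\Theta\wedge\al_{h}(a_{m+1}))=\vpt_L(\Theta)$. I would prove the latter by appending one constant and summing its extra factor over $c_{m+1}$ and over the $2^q$ atoms, using the binomial identity $\sum_{\al}x^{\gamma_q(\al)}(1-x)^{q-\gamma_q(\al)}=1$ (so $\sum_h b_{h,c}=1$ for every $c\geq0$) together with $\sum_{c\geq0}p_c=1$; the point is that for an already-used positive colour only the matching atom survives, yet summing over atoms still leaves exactly $p_{c_{m+1}}$, so the identity closes regardless of the colour's status. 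From the closed form, $\vpt_L(\Theta)$ is visibly invariant under permuting the constants (permute the colouring simultaneously), giving Ex; and since $b_{h,c}$ depends on the atom only through $\gamma_q(h)$ while the combinatorics of admissible colourings sees an atom-class only through its size, $\vpt_L(\Theta)$ is a function of the multiset of atom-class sizes at each negation-count $i$, i.e.\ of the $P$-spectrum, giving SPx.

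For ULi I would take the witnessing family to be $\vpt_{\mathcal L}$ itself, one for each finite unary $\mathcal L$; each satisfies Ex and, by the preceding Proposition, Px (in fact SPx and WIP). The only thing to check is restriction-compatibility, and its crux is the predicate-marginalisation identity $b^{(q+1)}_{\al_h\wedge P_{q+1},\,c}+b^{(q+1)}_{\al_h\wedge\neg P_{q+1},\,c}=b^{(q)}_{h,c}$, valid for every $c\geq0$ because each predicate is negated independently. Pushing this identity through the recursion yields $\sum_{\Theta^{+}}\vpt_{L_{q+1}}(\Theta^{+})=\vpt_{L_q}(\Theta)$, the sum being over the one-predicate extensions of $\Theta$; iterating this step and using Px to absorb relabellings of predicates gives $\vpt_{\mathcal L'}\restr S\mathcal L=\vpt_{\mathcal L}$ whenever $\mathcal L\subseteq\mathcal L'$.

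WIP is where I expect the real work. Straight from the recursion the factorisation $\vpt(\theta\wedge\phi)=\vpt(\theta)\cdot\vpt(\phi)$ is opaque, so I would argue through the generative reformulation above --- presumably the ``different definition'' referred to in the text --- after first proving that it agrees with the recursive definition on state descriptions. Write $A,B$ for the disjoint predicate sets and $C_\theta,C_\phi$ for the disjoint constant sets of $\theta,\phi$. The decisive facts are: (i) the colouring is i.i.d.\ across constants, so the colours of $C_\theta$ are independent of those of $C_\phi$; and (ii) conditional on the whole colouring, the family of $A$-restrictions of all constants is independent of the family of $B$-restrictions, because within any positive colour the shared atom is a \emph{product} over predicates, so its $A$- and $B$-parts are independent, while a colour-$0$ constant's internal $A$--$B$ correlation is never exercised since $C_\theta\cap C_\phi=\emptyset$. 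Conditioning on the colouring, factorising by (ii), and then summing over the two independent blocks of colours by (i) delivers $\vpt(\theta\wedge\phi)=\vpt(\theta)\cdot\vpt(\phi)$. The main obstacle, and the reason for the detour, is exactly to make this rigorous: establishing the equivalence of the two definitions and verifying that the positive-colour coupling leaks no correlation between $A$ and $B$ --- which is precisely why the positive colours must carry single points $\tau_c$ rather than measures.
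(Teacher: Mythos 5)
The statement you are proving is one the paper itself does not prove: Theorem \ref{thmvpt} is quoted without proof, the reader being referred to Chapter 3 of \cite{KliessThesis}, so there is no in-paper argument to compare yours against line by line. Judged on its own merits, your sketch is correct. The closed form for $\jpt_L$ does follow from the recursion (note that you have silently repaired the paper's typo in $b_{h_m,c_m}$, whose second exponent must be $q-\gamma_q(h_m)$ for the binomial identity you invoke to hold); the marginalisation argument is sound, including the key observation that an already-used positive colour still contributes exactly $p_{c_{m+1}}$ after summing over atoms; SPx holds for precisely the reason you give, since admissibility of a colouring and the $b$-factors see only class sizes and negation counts, which is exactly the $P$-spectrum; the identity $b^{(q+1)}_{\al\wedge P_{q+1},c}+b^{(q+1)}_{\al\wedge\neg P_{q+1},c}=b^{(q)}_{\al,c}$ does yield restriction-compatibility, the hidden point (which your argument accommodates) being that summing over the $2^n$ extensions of a state description only keeps terms in which each positive colour class chooses $\pm P_{q+1}$ uniformly, so the sum factors through each $b$-factor exactly once; and the WIP argument by conditioning on the colouring is valid, resting on the two facts you isolate: a shared atom for a positive colour is a product measure over predicates, so its $A$- and $B$-parts are independent, and no colour-$0$ constant occurs in both sentences, so its internal $A$--$B$ correlation never matters.

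Where you differ from the official route: footnote 7 and the remark following Definition \ref{defvpt} suggest that the thesis proof is algebraic, working through the mixture representation $\vpt_n$ of \eqref{repvptn}, i.e.\ through convex combinations of the functions $w_{\vec{x}}$, with the WIP ``detour'' apparently running through the more general definition in which the $\tau_i$, $i>0$, are measures --- rather than, as you guessed, through your generative reformulation, though the two are close in spirit. The trade-off is this: the $\vpt_n$ form is reusable machinery (it drives the proof of the general representation theorem in Section 4 of the paper), whereas your conditional-independence argument is self-contained, makes transparent \emph{why} WIP holds and why the $\tau_i$ for $i>0$ must be single points rather than measures, and avoids the limit and normalisation bookkeeping of the mixture form. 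To turn your sketch into a full proof, the one deferred item you must carry out is the verification that the generative model reproduces the recursive definition on state descriptions and defines an honest (countably additive, consistently marginalising) measure --- but your closed form together with your marginalisation computation essentially already gives this.
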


We can now state the representation theorem for functions satisfying ULi with SPx.
In the proof we will be using methods from non-standard analysis, and in particular Loeb measure
theory, to obtain the desired results. See e.g. \cite{Cutland} for details.

\begin{thm}\label{RepThm}
	Let $w$ be a probability function on $L_q$. Then $w$ satisfies ULi with SPx if and only if there exists a normalised $\sigma$-additive measure $\mu$ on
	$\Bcal$ such that
	\begin{gather}
		w = \int_{\Bcal} \vpt\,d\mu(\<\overp,\overtau\>).\label{eqRepThm1}
	\end{gather}
\end{thm}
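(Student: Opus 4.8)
The converse (``if'') direction is routine. By Theorem~\ref{thmvpt} each $\vpt$ is a probability function satisfying Ex, SPx and ULi, and an integral of probability functions against a normalised measure is again a probability function. Since SPx asserts the equalities $\vpt(\Theta)=\vpt(\Phi)$ whenever $\Theta$ and $\Phi$ have the same $P$-spectrum, these equalities survive integration, so $w=\int_{\Bcal}\vpt\,d\mu$ satisfies SPx. For ULi, the family witnessing ULi for each $\vpt$ is $\{\vpt_{\mathcal{L}}\}$; putting $w^{\mathcal{L}}=\int_{\Bcal}\vpt_{\mathcal{L}}\,d\mu$ for each finite unary $\mathcal{L}$ gives probability functions satisfying Px + Ex (indeed SPx, by Theorem~\ref{thmvpt} applied in each language), and since restriction commutes with the integral, $w^{\mathcal{L}'}\restr S\mathcal{L}=w^{\mathcal{L}}$ whenever $\mathcal{L}\subseteq\mathcal{L}'$. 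Thus $w$ satisfies ULi with SPx.

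The content of the theorem is the forward direction, which I would prove by a de Finetti--style argument through a Loeb measure. Exploiting ULi, I would pass to an infinite language: fix infinite hyperintegers $N,Q\in\StarN$ and regard $\Star{w}^{L_Q}$ as an internal measure on the hyperfinite set of state descriptions of $L_Q$ for $a_1,\dotsc,a_N$. From such a state description $\Theta$ I would read off a point of $\Bcal$ as follows. Partition $a_1,\dotsc,a_N$ into blocks realising a common $L_Q$-atom; the blocks whose size is a non-infinitesimal fraction of $N$ give, after taking standard parts of their normalised sizes, the weights $p_1\geq p_2\geq\dotsc$, with $p_0$ absorbing the remaining (infinitesimal-block) mass. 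For each such large block $i$ set $\tau_i=\std(\gamma_Q(\be_i)/Q)$, where $\be_i$ is its atom, and let $\tau_0$ be the Loeb distribution of $\std(\gamma_Q/Q)$ over the constants lying in infinitesimal blocks. This assigns to $\Theta$ an element $\<\overp,\overtau\>\in\Bcal$, and pushing the Loeb measure $L(\Star{w}^{L_Q})$ forward along $\Theta\mapsto\<\overp,\overtau\>$ defines the measure $\mu$ on $\Bcal$.

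It then remains to verify \eqref{eqRepThm1} on an arbitrary standard state description $\Phi_0(a_1,\dotsc,a_m)$ of $L_q$. By ULi and transfer, $w(\Phi_0)=\Star{w}^{L_Q}(\Phi_0)$, so I would condition $\Star{w}^{L_Q}$ on the block structure of $\Theta$. Constant Exchangeability turns this conditional probability into the probability of the pattern $\Phi_0$ under sampling $m$ constants from the block decomposition of $\Theta$, and a law-of-large-numbers computation identifies that sampling probability, up to an infinitesimal, with $\vpt(\Phi_0)$ for the parameters $\<\overp,\overtau\>$ read off from $\Theta$. Taking standard parts, integrating over $L(\Star{w}^{L_Q})$, and using the definition of the pushforward then gives $w(\Phi_0)=\int_{\Bcal}\vpt(\Phi_0)\,d\mu$, which is \eqref{eqRepThm1}.

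I expect the crux to be this last identification. One must check that conditioning on the hyperfinite block data reproduces exactly the three cases of the recursion defining $\jpt_L$, and in particular that the contribution of the constants in infinitesimal blocks converges to the integral $\int_{[0,1]}x^{\gamma}(1-x)^{q-\gamma}\,d\tau_0(x)$ occurring in $b_{h,0}$; this is precisely where ULi and the passage to the infinite language $L_Q$ are indispensable, for only there does the negation frequency $\gamma_Q/Q$ of a single constant acquire a genuine value in $[0,1]$. One must also verify that the extraction map $\Theta\mapsto\<\overp,\overtau\>$ is Loeb-measurable, so that $\mu$ is a genuine $\sigma$-additive measure, and must choose $N$ and $Q$ so that accidental atom-coincidences among the continuum constants remain of infinitesimal probability. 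Throughout, SPx guarantees that $w$ depends only on the $P$-spectrum, so that the block sizes and negation counts read off from $\Theta$ carry exactly the information determining $w$.
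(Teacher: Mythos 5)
Your proposal is correct and follows essentially the same route as the paper's own proof: use ULi to pass to a hyperfinite language $L_\nu$, condition on the $P$-spectrum of a hyperfinite state description (SPx making the conditional uniform on each spectrum class, so that it becomes a sampling probability identified up to infinitesimals with $\Star\vpt$), and then apply Loeb measure theory to produce a standard $\sigma$-additive measure on $\Bcal$, with the mass of the infinitesimally small blocks absorbed into $p_0$ and a measure $\tau_0$ recording their negation frequencies. The only cosmetic difference is that the paper defines $\mu$ directly on $P$-spectra by $\mu(\{\hat{\nu}\}) = w^{L_\nu}\left(\bigvee\overline{\Upsilon}\right)$ rather than as a pushforward of the Loeb measure of $\Star{w}^{L_\nu}$ along the parameter-extraction map, which is the same construction in different packaging.
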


\begin{proof}
	By Theorem \ref{thmvpt}, it is straightforward to show that any function of the form \eqref{eqRepThm1} satifies ULi with SPx.

	For the reverse, let $w$ be a probablity function on $L_q$ satisfying ULi with SPx. Working in a non-standard
	universe, let $\nu\in\Star{\N}\setminus\N$. Then by ULi there exists an extension 
	$w^{L_\nu}$ of $w$ to $L_\nu$.
	
	We can write $w^{L_\nu}$ as
	\begin{gather}
		w^{L_\nu}(\Theta(a_1,\ldots,a_n)) = \sum_{\substack{\Phi(a_1,\dotsc,a_\nu)\in\SDL_\nu\\\Phi\models\Theta}}w^{L_\nu}(\Phi(a_1,\dotsc,a_\nu)).\label{eqOne}
	\end{gather}
	
	Since $w^{L_\nu}$ satisfies SPx, we can rephrase \eqref{eqOne}: fix some $P$-spectrum $\hat{\nu}$ of a state description $\Upsilon(a_1,\dotsc,a_\nu)$ of $L_\nu$. Then
	for each state description $\Phi(a_1,\dotsc,a_\nu)$ such that $\PSpec(\Phi) = \PSpec(\Upsilon) = \hat{\nu}$ we have $w^{L_\nu}(\Phi) = w^{L_\nu}(\Upsilon)$.
	
	Letting $\overline{\Upsilon} = \{\Phi(a_1,\dotsc,a_\nu) \| \PSpec(\Phi) = \PSpec(\Upsilon)\}$ for a fixed state description $\Upsilon$, and partitioning the set of all state descriptions
	of $L_\nu$ for constants $a_1,\dotsc,a_\nu$ by a choice of $\overline{\Upsilon}$ we obtain
	\begin{align*}
		w^{L_\nu}(\Theta(a_1,\dotsc,a_n)) &=
		\sum_{\overline{\Upsilon}}\sum_{\substack{\Phi\in\overline{\Upsilon}\\\Phi\models\Theta}} w^{L_\nu}(\Phi(a_1,\dotsc,a_\nu))\\
		&=\sum_{\overline{\Upsilon}}|\{\Phi\in\overline{\Upsilon}\|\Phi\models\Theta\}|\cdot w^{L_\nu}(\Upsilon(a_1,\dotsc,a_\nu))\\
		&=\sum_{\overline{\Upsilon}} \frac{|\{\Phi\in\overline{\Upsilon}\|\Phi\models\Theta\}|}{|\overline{\Upsilon}|}\cdot |\overline{\Upsilon}|\cdot w^{L_\nu}\left(\Upsilon\right)\\
		&= \sum_{\overline{\Upsilon}} \frac{|\{\Phi\in\overline{\Upsilon}\|\Phi\models\Theta\}|}{|\overline{\Upsilon}|}\cdot w^{L_\nu}\left(\bigvee\overline{\Upsilon}\right),
	\end{align*}
	where $\bigvee\overline{\Upsilon}$ is the disjunction over all state descriptions in $\overline{\Upsilon}$.
	
	We will show that $\frac{|\{\Phi\in\overline{\Upsilon}\|\Phi\models\Theta\}|}{|\overline{\Upsilon}|}$
	is a probability function of the form $\vpt$ for some $\<\overp, \overtau\>\in\Bcal$.
	
	First note that if $\Phi\in\overline{\Upsilon}$ then there exists a permutation $\sigma$
	of the atoms of $L_\nu$ preserving $P$-spectra and a permutation $\rho$ of the constants
	$a_1,\dotsc,a_\nu$ such that $\Phi(a_1,\dotsc,a_\nu) = \sigma\Upsilon(a_{\rho(1)},\dotsc,a_{\rho(\nu)})$. We can then view $\frac{|\{\Phi\in\overline{\Upsilon}\|\Phi\models\Theta\}|}{|\overline{\Upsilon}|}$
	as the probability of obtaining some $\Phi(a_1,\dotsc,a_\nu)\models\Theta(a_1,\dotsc,a_n)$
	with $\Phi\in\overline{\Upsilon}$ by randomly picking permutations $\sigma$ and $\rho$ such
	that $\Phi(a_1,\dotsc,a_\nu) = \sigma\Upsilon(a_{\rho(1)},\dotsc,a_{\rho(\nu)})$.
	
	Fix some $\Upsilon\in\SDL_\nu$ with $\PSpec(\Upsilon) = \hat{\nu} = \<s_0,s_{1,1},\dotsc,s_{1,i_1},s_{2,1},\dotsc,s_{2,i_2},\dotsc,s_\nu\>$, where
	the $s_{i,j}$ are the sizes of the equivalence classes in the $P$-spectrum of $\Upsilon$.\footnote{Note that $s_0$ and $s_\nu$ only have a single index, as there can be only one equivalence class of constants instantiating an atom with no negations or only negations occurring, respectively.} 
	Then the probability of picking a constant of a particular equivalence class at random is
	$\frac{s_{i,j}}{\nu}$, where $s_{i,j}$ is the size of this equivalence class according
	to $\PSpec(\Upsilon)$. Let $p_1\geq p_2\geq p_3\geq\dotsb\geq p_N$ be the list of these
	probabilities, in decreasing order, and let $s'_1,\dotsc,s'_N$ be an enumeration of the equivalence classes such that $p_i = \frac{s'_i}{\nu}$. Here, $N$ is the number of equivalence classes in $\hat{\nu}$. Note that $N$ may very well be infinite. Then the probability of randomly picking (with replacement) constants $a_1,\dotsc,a_n$ from equivalence classes $s'_{c_1},\dotsc,s'_{c_n}$,
	for $1\leq c_1,\dotsc,c_n\leq N$, is $\prod_{i=1}^n p_{c_i}$.\footnote{Just as in the proof of the representation theorem for ULi (see \cite{KliessParis1}) it will suffice to consider picking constants with replacement, as the difference to picking constants without replacement will disappear once we take standard parts.}
	
	We now need to take care of the specific atom picked for each of the classes $s'_{c_1},\dotsc,s'_{c_n}$. Again, as we are going to take standard parts and thus
	the difference between picking atoms with and without replacement will disappear,
	we can treat picking an atom to represent each class as independent of each other.
	Note that the only restriction for picking an atom is that if the original atom
	in $\Upsilon$ representing this class has $j$ negated predicates occurring, so must
	the atom we pick at random have $j$ predicates in order to preserve the $P$-spectrum
	of $\Upsilon$. The atom we pick does not even have to occur in $\Upsilon$. We can view
	randomly picking an atom in this way just as picking predicates without replacement
	from the original atom, determining the atom we want to replace it with in this way.
	
	Consider the equivalence class $s'_{c_m}$ for some $m\leq n$. Suppose that the atom
	of $L_\nu$ this equivalence class represents has $j$ negated predicates occurring.
	Suppose that $\Theta(a_1,\dotsc,a_n)\models\al_{h_m}(a_m)$ with $\gamma_q(\al_{h_m}) = r$.
	Then the probability that picking an atom $\be$ of $L_\nu$ with $\gamma_\nu(\be) = j$ and
	$\be(x)\models\al_{h_m}(x)$ is determined as follows: picking predicates randomly from $\be$ without replacement and if the predicate $P_i$ for $1\leq i\leq q$
	occurs positively in $\al_{h_m}$, we obtain a factor $\frac{\nu-j-k_i}{\nu-(i-1)}$, where
	$k_i$ is the number of predicates $P_t$, $1\leq t<i$ that occur positively in $\al_{h_m}$.
	Similarly if $P_i$ occurs negatively we obtain a factor $\frac{j-k_i}{\nu-(i-1)}$ with $k_i$ the
	number of predicates occurring negatively in $\al_{h_m}$. Once we have picked $q$ predicates
	this way, we are left with an arbitrary choice of predicates to pick, resulting in a factor
	of $1$ for the choice of $P_t$, $t>q$. By commutativity, we obtain the probability as
	\begin{gather}
		\prod_{i=0}^{r-1}\frac{j-i}{\nu-i} \cdot\prod_{i=0}^{q-r-1}\frac{\nu-j-i}{\nu-i-r}.\label{tau_pre_def}
	\end{gather}
	As we will be taking standard parts and $i$ only takes finite values, we may write
	\eqref{tau_pre_def} as
	\begin{gather}
		\left(\frac{j}{\nu}\right)^{r}\cdot\left(\frac{\nu-j}{\nu}\right)^{q-r} = \tau^r(1-\tau)^{q-r},\label{tau_def}
	\end{gather}
	for some $\tau\in\Star[0,1]$, as the difference between \eqref{tau_pre_def} and \eqref{tau_def} will disappear once we have taken standard parts.
	
	Thus for the probability that a fixed $\Phi\in\overline{\Upsilon}$ is such that
	$\Phi\models\bigwedge_{i=1}^n\al_{h_i}(a_i)$ we obtain a factor of $\prod_{i=1}^n p_{c_i}$, where $s'_{c_i}$
	is the equivalence class of $a_i$ in the $P$-spectrum of $\Phi$. Furthermore we obtain
	a factor of $\tau_{k}^{\gamma_q(\al_{h_i})}(1-\tau_{k})^{q-\gamma_q(\al_{h_i})}$
	for each $k\in\{c_1,\dotsc,c_n\}$. The reason we only have one occurrence of this factor
	is that once we have picked an atom for one constant instantiating a particular equivalence
	class of the $P$-spectrum, each further constant from the same class must instantiate the same
	atom. For the same reason, the probability must be $0$ if we have $c_i=c_j$ for some
	$1\leq i,j\leq n$ but $\al_{h_i}$ and $\al_{h_j}$ are two different atoms. In contrast to
	the Px case we can treat the picking of predicates described above as independent for
	each equivalence class of atoms: we are in fact just picking atoms of $L_\nu$ with
	the correct number of negations occurring while ensuring their restriction to $L_q$ yields
	the desired atom. As the probability of picking the same atom of $L_\nu$ for any two equivalence	classes is infinitesimal, we can waive the difference between picking these
	atoms with and without replacement, resulting in the probability of each atom being picked independent of the choice of equivalence classes. Thus we obtain
	\begin{gather*}
		\prod_{i=1}^n p_{c_i} \cdot \prod_{k\in\{c_1,\dotsc,c_n\}}\tau_{k}^{\gamma_q(\al_{h_i})}(1-\tau_{k})^{q-\gamma_q(\al_{h_i})}
	\end{gather*}
	as the probability, in case that whenever $c_i = c_j$ then $h_i = h_j$. But this is just
	a non-standard version of
	$\jpt(\bigwedge_{i=1}^n\al_{h_i}(a_i),\<c_1,\dotsc,c_n\>)$ for
	$\overp = \<0,p_1,p_2,\dotsc\>$ and $\overtau = \<\tau_0,\tau_1,\tau_2,\dotsc,\>$ for
	an arbitrary measure $\tau_0$.
	
	We now obtain
	\begin{gather*}
		\frac{|\{\Phi\in\overline{\Upsilon}\|\Phi\models\Theta\}|}{|\overline{\Upsilon}|} = 
		\sum_{\substack{\<c_1,\dotsc,c_n\>\\c_i\geq 1}}\Star\jpt(\Theta(a_1,\dotsc,a_n),\<c_1,\dotsc,c_n\>) = \Star\vpt(\Theta(a_1,\dotsc,a_n)),
	\end{gather*}
	for a fixed $\Theta$ such that $\PSpec(\Theta)=\hat{n}$ and the $\Star{}$ indicating the
	non-standard versions of the functions defined in Definition \ref{defvpt}.
	
	Define a measure $\mu$ on the $P$-spectra of state descriptions of $L_\nu$ by
	$\mu(\{\hat{\nu}\}) = w^{L_\nu}\left(\bigvee\overline{\Upsilon}\right)$
	and we obtain
	\begin{align*}
		 \sum_{\hat{\nu}}\frac{|\{\Phi\in\overline{\Upsilon}\|\Phi\models\Theta\}|}{|\overline{\Upsilon}|}\cdot w^{L_\nu}\left(\bigvee\overline{\Upsilon}\right) &=
		 \int_{\hat{\nu}}\frac{|\{\Phi\in\overline{\Upsilon}\|\Phi\models\Theta\}|}{|\overline{\Upsilon}|}\,d\mu(\hat{\nu})\\
		 &= \int_{\hat{\nu}}\Star\vpt(\hat{n})\,d\mu(\hat{\nu}).
	\end{align*}
	
	Now taking standard parts we obtain
	\begin{align}
		w^{L_q}(\Theta(a_1,\dotsc,a_n)) &=
		\leftidx{^{\circ}}{\left(\vphantom{\int_{\hat{\nu}}}\right.}
			\left.\int_{\hat{\nu}}\Star\vpt(\Theta(a_1,\dotsc,a_n))\,d\mu(\hat{\nu})\right)\nonumber\\
		&= \leftidx{^{\circ}}{\left(\vphantom{\int_{\hat{\nu}}}\right.}
					\left.\int_{\<\overp,\overtau\>}\Star\vpt(\Theta(a_1,\dotsc,a_n))\,d\mu(\<\overp,\overtau\>)\right),\label{initial_problem}
	\end{align}
	as $\<\overp,\overtau\>$ represents $\hat{\nu}$.
	
	We can now use Loeb measures to move the operation of taking standard parts inside the
	integral and claim that $\std(\Star\vpt(\Theta(a_1,\dotsc,a_n))) = v^{\overline{p'},\overline{\tau'}}(\Theta(a_1,\dotsc,a_n))$ for some standard versions
	$\overline{p'}$ of $\overp$ and $\overline{\tau'}$ for $\overtau$.
	
	It remains to find $\<\overline{p'},\overline{\tau'}\>\in\Bcal$ such that
	\begin{gather}
		\std(\Star\vpt(\Theta(a_1,\dotsc,a_n))) = v^{\overline{p'},\overline{\tau'}}(\Theta(a_1,\dotsc,a_n))\label{eq_stdvpt}
	\end{gather}
	First notice that $\Star\jpt(\Theta(a_1,\dotsc,a_n),\vec{c})$ is a finite product for each $\vec{c}$ and
	so we can move the standard part operation all the way to the individual factors.
	This provides us with an obvious candidate for $\overline{\tau'}$: take $\tau'_i = \std\tau_i$ for each $i\in\N$, $i\geq 1$.
	
	For the $p_i$ there are three cases to consider.
	If there are only finitely many equivalence classes in $\hat{\nu}$,
	there exists some $n\in\N$ such that $p_i = 0$ for each $i> n$. In this case we have 
	$\sum_{i=1}^n \std p_i = 1$ (as $\sum_{i\in\N}p_i = 1$) and we can
	take $p'_i = \std p_i$ for each $1\leq i\leq n$ and let $p'_i = 0$ otherwise.
	Then clearly we have
	\begin{gather}
		\std(\Star\jpt(\Theta(a_1,\dotsc,a_n),\vec{c})) = j^{\overline{p'},\overline{\tau'}}(\Theta(a_1,\dotsc,a_n),\vec{c})\label{eq_stdjpt}
	\end{gather}
	and \eqref{eq_stdvpt} holds for $\<\overline{p'},\overline{\tau'}\>$ as there are only
	finitely many instances of $j^{\overline{p'},\overline{\tau'}}$ occurring.
	
	Otherwise we have that $p_i>0$ for each $i\in\N$. Suppose that in this case we have $\sum_{i\in\N}\std p_i = 1$, i.e. there is no weight carried by the $p_i$ with non-standard indices $i\in\StarN\setminus\N$. Then we can take $p'_i = \std p_i$ for each $i\in\N$, $i\geq 1$ and $p_0 = 0$ for \eqref{eq_stdjpt} to hold: we can interpret $\vpt(\Theta(a_1,\dotsc,a_n),\vec{c})$ as an instance of integration by a discrete measure: Let $\rho$ be the product measure giving
	weight $\prod_{i=1}^n p_{c_i}$ to the point $\vec{c}\in\StarN^n$ and let $f$ be the function defined by
	\begin{gather*}
		f(\Theta(a_1,\dotsc,a_n),\vec{c}) = \prod_{i\in\{c_1,\dotsc,c_n\}}\tau_i^{\gamma_q(h_i)}(1-\tau_i)^{q-\gamma_q(h_i)}.
	\end{gather*}
	Then we can write
	\begin{gather}
		\Star\vpt(\Theta(a_1,\dotsc,a_n),\vec{c}) = \sum_{\vec{c}\in\StarN^n}\jpt(\Theta(a_1,\dotsc,a_n),\vec{c}) = \int_{\StarN}f(\Theta(a_1,\dotsc,a_n),\vec{c})\,d\rho(\vec{c}).\label{eqLoeb2}
	\end{gather}
	Now taking the standard part of the integral in \eqref{eqLoeb2} and applying Loeb measure theory in this situation, we obtain
	as Loeb measure $\rho^L$ the measure on $\N^n$ giving weight $\prod_{i=1}^n p'_{c_i}$
	to the point $\vec{c}\in\N^n$, observing that whenever $\vec{c}\in\StarN^n$ is such that
	for some $j\leq n$, $c_j\in\StarN\setminus\N$, we have $\leftidx{^{\circ}}{\left(\vphantom{\prod_{i=1}^n}\right.}\left.\prod_{i=1}^n p_{c_i}\right) = 0$ and thus \eqref{eq_stdvpt} holds.
	
	Lastly suppose that $\sum_{i\in\N}\std p_i < 1$. In this case we immediately have that
	while $\Star\vpt(\top) = 1$, simply taking standard parts of each $p_i$ as above
	will lead to $v^{\overline{p'},\overline{\tau'}}(\top) < 1$. The obvious problem now is
	to attribute the weight `lost' by simply taking standard parts to the $p_i'$ with standard
	indices. As $p_0$ has not been used yet, we will put all this weight into $p'_0$ and it
	remains to find a measure $\tau_0$ such that the equation \eqref{eq_stdvpt} holds.

	In more detail, let $p'_i = \std p_i$ for each $i\in\N$, $i\geq 1$ and let $p'_0 = 1 - \sum_{i\in\N} \std p_i$. Fix some $\vec{c}$ such that there exists $j$ with $\std p_{c_j} = 0$ and $p_{c_j}>0$. Let $\C(\vec{c},j)$ be the collection 
	\begin{gather*}
		\C(\vec{c},j) = \{\vec{e} \| \forall i\, (i\neq j \rightarrow e_i = c_i) \wedge (i = j \rightarrow p_{e_i}>0 \wedge \std p_{e_i} = 0)\},
	\end{gather*}
	i.e. the sequences in $\C(\vec{c},j)$ only differ in the $j$th component, and each of the disappearing $p_i$ occurs in one of the sequences.
	
	Then we obtain
	\begin{multline*}
		\leftidx{^{\circ}}{\left(\vphantom{\sum_{\vec{e}\in\C(\vec{c},j)}}\right.}\left.\sum_{\vec{e}\in\C(\vec{c},j)}\prod_{i=1}^n p_{e_i} \cdot \prod_{k\in\{e_1,\dotsc,e_n\}}\tau_{k}^{\gamma_q(\hat{h}_k)}(1-\tau_{k})^{q-\gamma_q(\hat{h}_k)}\right) =\\ \leftidx{^{\circ}}{\left(\vphantom{\prod_{\substack{i=1\\i\neq j}}^n}\right.}\left.\prod_{i=1}^n p_{c_i} \cdot \prod_{k\in\{c_1,\dotsc,c_n\}\setminus\{c_j\}}\tau_{k}^{\gamma_q(\hat{h}_k)}(1-\tau_{k})^{q-\gamma_q(\hat{h}_k)}\right)\cdot \leftidx{^{\circ}}{\left(\vphantom{\sum_{\substack{p_j > 0\\\std p_j = 0}}}\right.}\left.\sum_{\substack{p_j > 0\\\std p_j = 0}} p_j\cdot \tau_j^{\gamma_q(h_j)}(1-\tau_j)^{q - \gamma_q(h_j)}\right),
	\end{multline*}
	where $\hat{h}_k$ here is that $h_s$ such that $k = c_s$ ($k=e_s$, respectively).
	
	Let $\tau_0$ be a measure on $\Star[0,1]$ such that $\tau_0$ gives weight $p_j/p'_0$ to
	the point $\tau_j$ for each $j$ such that $p_j>0$ and $\std p_j = 0$, and weight $0$ to all other points. Then we obtain
	\begin{align}
		\leftidx{^{\circ}}{\left(\vphantom{\sum_{\substack{p_j > 0\\\std p_j = 0}}}\right.}\left.\sum_{\substack{p_j > 0\\\std p_j = 0}} p_j\cdot \tau_j^{\gamma_q(h_j)}(1-\tau_j)^{q - \gamma_q(h_j)}\right) &= p'_0\cdot \leftidx{^{\circ}}{\left(\vphantom{\int_{\Star[0,1]}}\right.}\left.\int_{\Star[0,1]}x^{\gamma_q(h_j)}(1-x)^{q - \gamma_q(h_j)}\,d\tau_0(x)\right)\nonumber\\
		&= p'_0 \int_{[0,1]}x^{\gamma_q(h_j)}(1-x)^{q - \gamma_q(h_j)}\,d\tau^L_0(x),\label{factor_tau_0}
	\end{align}
	where $\tau^L_0$ is the corresponding Loeb measure to $\tau_0$. We can continue in this way
	until all such $p_i$ have been collected into a factor of the form \eqref{factor_tau_0}.
	
	Letting $\tau'_0 = \tau^L_0$ a straightforward calculation shows that
	\begin{gather}
		\std\Star\vpt(\Theta) = v^{\overline{p'},\overline{\tau'}}(\Theta),\label{vpt_solution}
	\end{gather}
	as we can now reduce the left-hand side to the situation given in \eqref{eqLoeb2}, with $p_0 = p_0'$, $\tau_0$ as defined above, and $\sum_{i\in\N}\std p_i = 1$ : as the $p_i > 0$ with $\std p_i = 0$ are now collected into $p_0$, just let these $p_i = 0$.
	
	Going back to \eqref{initial_problem}, we obtain together with \eqref{vpt_solution}
	\begin{align*}
		w^{L_q}(\Theta(a_1,\dotsc,a_n)) &= \leftidx{^{\circ}}{\left(\vphantom{\int_{\hat{\nu}}}\right.}
							\left.\int_{\<\overp,\overtau\>}\Star\vpt(\Theta(a_1,\dotsc,a_n))\,d\mu(\<\overp,\overtau\>)\right)\\
		&= \int_{\<\overline{p'},\overline{\tau'}\>}v^{\overline{p'},\overline{\tau'}}(\Theta(a_1,\dotsc,a_n))\,d\mu^L(\<\overline{p'},\overline{\tau'}\>),
	\end{align*}
	where $\mu^L$ is the Loeb measure corresponding to $\mu$.
\end{proof}

\begin{lem}
	The $\vpt$ are the only functions satisfying ULi with SPx + WIP.
\end{lem}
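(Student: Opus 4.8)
The plan is to prove the two inclusions. One direction is immediate from Theorem~\ref{thmvpt}: every $\vpt$, together with every member of its ULi family, satisfies Ex, SPx, ULi and WIP, so each $\vpt$ is indeed a function satisfying ULi with SPx\,+\,WIP. For the converse, suppose $w$ satisfies ULi with SPx\,+\,WIP. Applying Theorem~\ref{RepThm} in each language $L_m$ lets me write $w^{L_m}=\int_\Bcal \vpt_{L_m}\,d\mu$, and since the $\vpt$ themselves satisfy ULi these representations are compatible under restriction. The whole task then reduces to showing that the representing measure $\mu$ is a point mass $\delta_{\<\overp_0,\overtau_0\>}$, for then $w=v^{\overp_0,\overtau_0}$.

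The engine I would use is to exploit WIP through a disjoint copy of a sentence in a doubled language. Fix a state description $\theta(a_1,\dots,a_n)$ of $L_q$, regard it as a sentence of $L_{2q}$, and let $\theta'$ be its image under the substitution $P_i\mapsto P_{q+i}$, $a_j\mapsto a_{n+j}$. Then $\theta$ and $\theta'$ share neither predicates nor constants, so WIP for $w^{L_{2q}}$ gives $w^{L_{2q}}(\theta\wedge\theta')=w^{L_{2q}}(\theta)\,w^{L_{2q}}(\theta')$. Since SPx implies Px by the preceding Proposition and every function in sight also satisfies Ex, while $\theta'$ arises from $\theta$ by a permutation of the $2q$ predicates and a shift of constants, I have $w^{L_{2q}}(\theta')=w^{L_{2q}}(\theta)=w(\theta)$ and likewise $\vpt_{L_{2q}}(\theta')=\vpt_{L_{2q}}(\theta)=\vpt_{L_q}(\theta)$, the last step by ULi of the $\vpt$. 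Feeding the representation into both sides and using that each $\vpt_{L_{2q}}$ itself satisfies WIP (so $\vpt_{L_{2q}}(\theta\wedge\theta')=\vpt_{L_q}(\theta)^2$) yields
$$\int_\Bcal \vpt_{L_q}(\theta)^2\,d\mu \;=\; w^{L_{2q}}(\theta\wedge\theta') \;=\; w(\theta)^2 \;=\; \left(\int_\Bcal \vpt_{L_q}(\theta)\,d\mu\right)^{\!2}.$$
Hence the $\mu$-variance of $\<\overp,\overtau\>\mapsto \vpt_{L_q}(\theta)$ vanishes, i.e.\ this map is $\mu$-almost surely the constant $w(\theta)$.

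To conclude, I would run this argument at every level, using $L_{2m}$ to control the state descriptions of $L_m$, obtaining that for each $m$ and each state description $\theta$ of $L_m$ one has $\vpt_{L_m}(\theta)=w^{L_m}(\theta)$ for $\mu$-almost every $\<\overp,\overtau\>$. As there are only countably many such pairs $(m,\theta)$, a single $\mu$-conull set $B\subseteq\Bcal$ serves all of them at once; on $B$ the entire family $(\vpt_{L_m})_m$ is constant and equal to $(w^{L_m})_m$. Since the parametrisation $\<\overp,\overtau\>\mapsto(\vpt_{L_m})_m$ is injective — distinct parameters being separated by their values on state descriptions across the hierarchy — the parameter is $\mu$-almost surely a single point $\<\overp_0,\overtau_0\>$, so $\mu=\delta_{\<\overp_0,\overtau_0\>}$ and $w=v^{\overp_0,\overtau_0}$.

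The hard part will be precisely this last identifiability step, which quietly needs two structural facts about the representation: that the parametrisation $\<\overp,\overtau\>\mapsto\vpt$ is one-to-one, and that the measure of Theorem~\ref{RepThm} is unique, so that the measures at the various language levels may be taken to be one and the same $\mu$ rather than merely compatible upon restriction. Both are genuine statements about the geometry of the convex set of ULi-with-SPx functions — in effect that $\Bcal$ is a Borel, injective parametrisation of its extreme points and that this set is a simplex. Establishing these carefully (or, alternatively, arguing directly that any $w$ satisfying WIP must be an extreme point and then invoking uniqueness of the Choquet decomposition) is where the real effort lies; the variance identity above is only the mechanism that forces the decomposition to collapse to a single point.
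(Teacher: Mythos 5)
Your central mechanism is exactly the paper's: a disjoint copy $\theta'$ of $\theta$ in a larger language, WIP for $w^{L_{2q}}$ and for each $\vpt$, Px $+$ Ex to get $w^{L_{2q}}(\theta')=w(\theta)$ and $\vpt_{L_{2q}}(\theta')=\vpt_{L_q}(\theta)$, and then the vanishing of the $\mu$-variance of $\<\overp,\overtau\>\mapsto\vpt(\theta)$. The paper packages the identical computation as a symmetric double integral,
\begin{gather*}
\int_{\Bcal}\int_{\Bcal}\bigl(v^{\overline{r},\overline{\rho}}(\theta)-\vpt(\theta)\bigr)^2\,d\mu(\<\overline{r},\overline{\rho}\>)\,d\mu(\<\overp,\overtau\>) = 0,
\end{gather*}
which is just twice the variance you compute. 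Up to the point where you produce a single $\mu$-conull set $B$ on which $\vpt(\theta)=w(\theta)$ for all of the countably many quantifier-free $\theta$, you and the paper are doing the same thing.

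The gap is your final step, and it is a real one: the two ``structural facts'' you say the proof quietly needs are neither true nor needed. The map $\<\overp,\overtau\>\mapsto\vpt$ is genuinely not injective: if $p_i=0$ for all $i\geq 2$, then the values $\tau_2,\tau_3,\dotsc$ never enter Definition \ref{defvpt} (every choice $c_m$ is weighted by $p_{c_m}$, so those terms vanish); if $p_i=p_j$, then exchanging $\tau_i$ and $\tau_j$ leaves $\vpt$ unchanged; and $\tau_0$ is irrelevant whenever $p_0=0$. Consequently the representing measure cannot be forced to be a point mass --- it may perfectly well spread over a set of distinct parameters that all induce the same function, so the statement ``the parameter is $\mu$-almost surely a single point'' is false in general, and no Choquet-type rescue is required either. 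The lemma asserts only that $w$ \emph{equals some} $\vpt$, not that the decomposition collapses. Once you know that $\vpt_{L_{2q}}(\theta)=w(\theta)$ for every $\theta\in\QFSL$ and every $\<\overp,\overtau\>$ in the conull set $B$, you are done: $B$ has measure $1$, hence is nonempty; pick any single parameter in it; for that parameter, $w$ and $\vpt_{L_{2q}}\restr\SL=\vpt_{L_q}$ agree on all quantifier-free sentences and hence everywhere by Gaifman's Theorem. This is precisely how the paper concludes, and it also dissolves your worry about needing one measure for all levels $m$ at once (which indeed does not follow from Theorem \ref{RepThm}): running the argument once, in $L_{2q}$, already proves the lemma.
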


\begin{proof}
	Let $w$ be a probability function satisfying ULi with SPx + WIP and let $\theta\in\QFSL$.
	Using ULI, extend $w$ to some $w'$ on a language $L'$ large enough so that we can permute
	the predicates and constants of $\theta$ to obtain $\theta'\in\QFSL'$ with no predicates or
	constants in common with $\theta$. As SPx implies Px, and by ULi we obtain
	$w(\theta) = w'(\theta) = w'(\theta')$. Then we obtain
	\begin{align*}
		0 &= 2(w'(\theta\wedge\theta') - w'(\theta)\cdot w'(\theta'))\\
		&= \int_{\Bcal}v^{\overline{r},\overline{\rho}}_{L'}(\theta\wedge\theta')\,d\mu(\<\overline{r},\overline{\rho}\>)
		- 2\int_{\Bcal}v^{\overline{r},\overline{\rho}}_{L'}(\theta)\,d\mu(\<\overline{r},\overline{\rho}\>)\cdot \int_{\Bcal}\vpt_{L'}(\theta')\,d\mu(\<\overp,\overtau\>)\\
		&\qquad +\int_{\Bcal}\vpt_{L'}(\theta\wedge\theta')\,d\mu(\<\overp,\overtau\>)\\
		&= \int_{\Bcal}\int_{\Bcal}\left(v^{\overline{r},\overline{\rho}}(\theta)^2 - 2v^{\overline{r},\overline{\rho}}(\theta)\cdot\vpt(\theta) + \vpt(\theta)^2\right)\,d\mu(\<\overline{r},\overline{\rho}\>)\,d\mu(\<\overp,\overtau\>)\\
		&= \int_{\Bcal}\int_{\Bcal}\left(v^{\overline{r},\overline{\rho}}(\theta) - \vpt(\theta)\right)^2\,d\mu(\<\overline{r},\overline{\rho}\>)\,d\mu(\<\overp,\overtau\>),
	\end{align*}
	by the Representation Theorem \ref{RepThm}. As the function under the integral is non-negative we must have a measure $1$ subset of $\Bcal$ such that $\vpt_{L'}$ is constant on this set
	for each $\theta\in\QFSL$ and therefore we must have $w' = \vpt_{L'}$ for any $\<\overp,\overtau\>$ in this set. By ULi we obtain that $w = w'\restr\SL = \vpt_L$, as required.
\end{proof}

\section{The General Representation Theorem}

Just as for the probability functions satisfying Predicate and Atom Exhcangeability\footnote{See e.g. \cite{KliessParis1} for the result concerning Predicate Exchangeability and \cite[Chapter 34]{ParisVencovskaBook} for the result concerning Atom Exchangeability.}, respectively,
we obtain a similar result for Strong Predicate Exchangeability:

\begin{thm}
	Let $w$ be a probability function on $L$. Then $w$ satisfies SPx if and only if
	there are $\lambda\geq 0$ and probability functions $w_1, w_2$ satisfying ULi with SPx
	such that
	\begin{gather*}
		w = (1+\lambda)w_1 - \lambda w_2.
	\end{gather*}
\end{thm}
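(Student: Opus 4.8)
The plan is to prove the two directions separately, the forward (``if'') direction being routine and the reverse direction resting on a signed-measure strengthening of Theorem \ref{RepThm}. For the easy direction, suppose $w = (1+\lambda)w_1 - \lambda w_2$ with $w_1,w_2$ satisfying ULi with SPx and $\lambda\geq 0$. Since SPx is a \emph{linear} constraint on the values $w(\Theta)$ --- it merely demands $w(\Theta)=w(\Phi)$ whenever $\PSpec(\Theta)=\PSpec(\Phi)$ --- and $w_1,w_2$ each satisfy it, so does every affine combination of them; moreover $w(\top)=(1+\lambda)-\lambda=1$ and the additivity axiom is preserved under affine combinations, so $w$ is a probability function satisfying SPx. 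I would dispatch this in a couple of lines.

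For the reverse direction I would first reduce the problem to representing $w$ as an integral of the building blocks against a \emph{signed} measure. Concretely, I claim that every probability function $w$ satisfying SPx can be written as $w = \int_{\Bcal}\vpt\,d\mu(\langle\overp,\overtau\rangle)$ for some finite signed normalised $\sigma$-additive measure $\mu$ on $\Bcal$, i.e.\ with $\mu(\Bcal)=1$. Granting this, the theorem follows from the Hahn--Jordan decomposition: writing $\mu=\mu^{+}-\mu^{-}$ and setting $\lambda=\mu^{-}(\Bcal)\geq 0$ (so $\mu^{+}(\Bcal)=1+\lambda$), the functions $w_1=\frac{1}{1+\lambda}\int_{\Bcal}\vpt\,d\mu^{+}$ and $w_2=\frac{1}{\lambda}\int_{\Bcal}\vpt\,d\mu^{-}$ are integrals against \emph{normalised} measures, hence by Theorem \ref{RepThm} are probability functions satisfying ULi with SPx, and visibly $w=(1+\lambda)w_1-\lambda w_2$. (If $\lambda=0$ then $\mu$ is already a probability measure and $w=w_1$ satisfies ULi with SPx outright.)

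The real work, and the main obstacle, is establishing the signed representation. Equivalently, I must show that every SPx function lies in the affine hull of the convex set $\mathcal{V}$ of functions satisfying ULi with SPx, which by Theorem \ref{RepThm} is exactly the set of integrals of the $\vpt$ against normalised measures. I would attack this level by level: for each $n$, SPx collapses the values of a function on state descriptions of $a_1,\dotsc,a_n$ to the finitely many values indexed by $P$-spectra, so the SPx functions restrict to a finite-dimensional polytope $S_n$, and I would prove that the restrictions of the $\vpt$ affinely span the affine hull of $S_n$. The natural route is a Vandermonde-type argument: specialising to point measures $\tau_0=\delta_x$ and finitely supported $\overp$ turns each value $\vpt(\Theta)$ into an explicit polynomial in $x$ and the $\tau_i$, and one shows that suitably many such choices yield affinely independent value-vectors exhausting $S_n$. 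This dimension-matching --- that imposing ULi on top of SPx does not shrink the affine dimension at any finite level --- is the crux, and is the analogue of the corresponding facts in the Px and Ax cases cited above.

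Finally I would glue the finite levels into a single global signed measure on $\Bcal$. The delicate analytic point is uniformity: the level-$n$ decompositions yield signed measures $\mu_n$, and to extract a limit that is a genuine finite signed measure (so that $\lambda=\mu^{-}(\Bcal)<\infty$) I need the total variations $\|\mu_n\|$, equivalently the overshoots $\lambda_n$, to be bounded independently of $n$. I expect to obtain this either from a compactness argument on $\mathcal{V}$ in the product topology, or --- paralleling the proof of Theorem \ref{RepThm} --- by working in a non-standard universe and pushing the construction through a Loeb measure, taking standard parts at the end. Securing this uniform bound, rather than the per-level linear algebra, is where I anticipate the difficulty to concentrate.
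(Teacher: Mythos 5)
Your forward direction and the Hahn--Jordan reduction are sound: granting a representation $w=\int_{\Bcal}\vpt\,d\mu$ with $\mu$ a finite signed measure of total mass $1$, writing $\mu=\mu^{+}-\mu^{-}$ and invoking Theorem \ref{RepThm} for the normalised pieces does yield the theorem. The gap is that the signed representation itself --- which you correctly identify as the entire content of the reverse direction --- is never established. Your plan is to prove it level by level in the number $n$ of constants (affine spanning of the polytope $S_n$ by restrictions of the $\vpt$) and then glue the resulting signed measures $\mu_n$ into one measure, which requires a bound on the total variation of the $\mu_n$ uniform in $n$. Nothing in your sketch supplies that bound: a priori the coefficients produced by level-by-level linear algebra can grow with $n$, and the compactness or Loeb-measure arguments you invoke cannot even begin without it. The per-level spanning claim is likewise asserted rather than proved. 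So the self-identified crux is left open, and the proposal as it stands does not prove the theorem.

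The paper's proof shows how to avoid this hierarchy entirely, which makes concrete what your route is missing. Since $L=L_q$ is fixed, $w$ (satisfying Ex) already has a de Finetti representation $w=\int_{\DD{q}}w_{\vec{x}}\,d\mu(\vec{x})$ over the finitely many atoms; averaging $w_{\vec{x}}$ over $P$-spectrum-preserving permutations of $\{1,\dotsc,2^q\}$ gives functions $z_{\vec{x}}$ with $w=\int_{\DD{q}} z_{\vec{x}}\,d\mu(\vec{x})$. The linear algebra is then done \emph{once}, not per level: each $v^{\overp,\overtau}_{2^q}$ (with $\overp$ read off from $\vec{x}$) decomposes as $\sum_{e\in E}a_{e,\overtau}\,S_e\,z_{e(\overp)}$ over a single finite index set $E$ determined by the atoms of $L_q$, and the Vandermonde-type argument (your ``dimension-matching'') is used to choose finitely many $\overtau_e$ making the matrix $A=(a_{f,\overtau_e})$ regular. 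Inverting $A$ expresses $z_{\vec{x}}$ as a fixed signed combination of the functions $v^{\overp,\overtau_e}_{2^q}$ whose coefficients depend only on the $\overtau_e$ and not on $\vec{x}$; hence one obtains a single $\lambda$ valid for every $\vec{x}$, and integrating over $\DD{q}$ gives $w=(1+\lambda)w_1-\lambda w_2$ with no limiting or gluing step at all. In short, your decomposition target (a signed mixture of the building blocks) is the right one, but the route to it should exploit the finite atom structure of the fixed language --- one matrix inversion yielding a uniform $\lambda$ --- rather than an infinite hierarchy of constant-levels whose coherence is exactly the unproved point.
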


In the proof, we will use a slightly different formulation\footnote{In fact, the functions $\vpt_{n,L_q}$ are instrumental to the proof of Theorem \ref{thmvpt}.} for the $\vpt$ functions:
Let $\vpt_{n,L_q}$ be defined by
\begin{gather}\label{repvptn}
	\vpt_n = \sum_{e\in Z^q_n}\prod_{r=1}^n \tau_r^{\gamma_q(e(r))}(1-\tau_r)^{q-\gamma_q(e(r))}\cdot w_{e(\overp)},
\end{gather}
where $Z^q_n$ is the set of all functions $e:\{1,\dotsc,n\}\rightarrow\{1,\dotsc,2^q\}$ and
$e(\overp)\in\DD{q}$ is given by
\begin{gather*}
	e(\overp) = \<f^q(1,0)R_{\overp,n} + \sum_{e(i)=1}p_i,f^q(2,0)R_{\overp,n} + \sum_{e(i)=2}p_i,\dotsc,f^q(2^q,0)R_{\overp,n} + \sum_{e(i)=2^q}p_i\>,
\end{gather*}
with $R_{\overp,n} = 1 - \sum_{i=1}^n p_i$ and $f^q(s,0) = \int_{[0,1]}x^{\gamma_q(s)}(1-x)^{q-\gamma_q(s)}\,d\tau_0(x)$. It is easy to verify that if $p_i = 0$ for $i>n$,
then $\vpt_{n,L_q} = \vpt_{L_q}$. As usual, we will omit $L_q$ whenever it is understood from the
context.

In the proof we will replicate the methods used for the analogous theorem for probability functions
satisfying Predicate Exchangeability, see e.g. Lemma 13 and Theorem 14 in \cite{KliessParis1}.

\begin{proof}
	Suppose that $w = (1+\lambda) w_1 - \lambda w_2$ for some $\lambda \geq 0$ and
	$w_1, w_2$ probability functions satisfying ULi with SPx. Then it is straightforward
	to check that $w$ is a probability function satisfying SPx, but not necessarily
	ULi.
	
	So suppose that $w$ is a probability function satisfying SPx.
	Let $\vec{x}\in\DD{q}$ and consider the function $w_{\vec{x}}$. We can close this function
	under SPx by letting
	\begin{gather}
		z_{\vec{x}} = \frac{1}{|\Sigma|}\sum_{\sigma\in\Sigma} w_{\sigma(\vec{x})},\label{eqz_x}
	\end{gather}
	where $\Sigma$ is the set of all permutations of $\{1,\dotsc,2^q\}$ that preserve $P$-spectra.
	
	It is then easy to check that we obtain a de Finetti representation of $w$ of the form
	\begin{gather}
		w = \int_{\DD{q}} z_{\vec{x}}\,d\mu(\vec{x}).\label{eqRepSpx}
	\end{gather}
	So suppose that $\mu$ is a measure putting all weight on a singleton $\{\vec{x}\}$, i.e.
	$w = z_{\vec{x}}$ for some $\vec{x}\in\DD{q}$.
	
	Pick $\overp$ such that $p_0 = 0$, $p_i = x_i$ for $i=1,\dotsc,2^q$ and $p_i = 0$ for $i > 2^q$.
	Then for arbitrary $\overtau$, $\vpt_{2^q}$ will have $z_{\vec{x}}$ occurring in its representation \eqref{repvptn}, as clearly $e\in Z^q_n$ such that $e(i) = i$ will result
	in $w_{\vec{x}} = w_{e(\overp)}$ occurring, and for each permutation $\sigma$ such that $w_{\sigma(\vec{x})}$
	occurs in $z_{\vec{x}}$ the function $e' = e\circ\sigma\in Z^q_n$ will ensure that $w_{\sigma(\vec{x})}$ occurs in $\vpt_{2^q}$ as well.
	
	Consider the factor $\prod_{r=1}^{2^q}\tau_r^{\gamma_q(e(r))}(1-\tau_r)^{q-\gamma_q(e(r))}$
	accompanying $w_{e(\overp)}$. If $\sigma$ is a permutation of $\{1,\dotsc,2^q\}$ that
	preserves $P$-spectra, then for $e' = e\circ\sigma$ we can easily check that
	\begin{gather}\label{eqAccFactors}
		\prod_{r=1}^{2^q}\tau_r^{\gamma_q(e(r))}(1-\tau_r)^{q-\gamma_q(e(r))} = \prod_{r=1}^{2^q}\tau_r^{\gamma_q(e'(r))}(1-\tau_r)^{q-\gamma_q(e'(r))},
	\end{gather}
	as the atoms $\al_{e(r)}$, $\al_{e'(r)}$ have the same number of negations.
	
	So we have each $w_{\sigma(\vec{x})}$ occurring in $z_{\vec{x}}$ occurring in $\vpt_{2^q}$
	with the same factor. Similarly, for each $\vec{x}\in\DD{q}$ such that $w_{\vec{x}}$ occurs
	in $\vpt_{2^q}$, all the other $w_{\vec{y}}$ occurring in $z_{\vec{x}}$ also occur in $\vpt_{2^q}$, so we can write
	\begin{gather}\label{rep_vptn_z}
		\vpt_{2^q} = \sum_{e\in E} a_{e,\overtau} \cdot S_e\cdot z_{e(\overp)},
	\end{gather}
	where $S_e$ is a normalizing factor, $a_{e,\overtau}$ is the factor $\prod_{r=1}^{2^q}\tau_r^{\gamma_q(e(r))}(1-\tau_r)^{q-\gamma_q(e(r))}$ and $E$ is the collection of representatives of equivalence classes of $Z^q_n$ under the equivalence relation
	$\sim_{Z^q_n}$ defined by
	\begin{gather*}
		e \sim_{Z^q_n} e' \Leftrightarrow w_{e(\overp)}, w_{e'(\overp)}\text{ occur in the same }z_{\vec{x}}.
	\end{gather*}
	
	Note that while
	$\vpt_{2^q}$ depends on both $\overp$ and $\overtau$, the functions $z_{e(\overp)}$ only
	depend on $\overp$ while $a_{e,\overtau}$ only depends on $\overtau$.
	
	Fixing an enumeration of $E$, fix some $\overtau_e$ for each $e\in E$ and let $A$ be the
	$E\times E$ matrix with entry $a_{\<e,f\>} = a_{f,\overtau_e}$. We then obtain the equation
	\begin{gather}\label{eqMatrix1}
		\begin{pmatrix}
			\vdots\\
			v^{\overp,\overtau_e}_{2^q}\\
			\vdots
		\end{pmatrix}
		= A\cdot
		\begin{pmatrix}
			\vdots\\
			S_e\cdot z_{e(\overp)}\\
			\vdots
		\end{pmatrix}.
	\end{gather}
	We will show that the entries of $A$, i.e. the $\overtau_e$, can be picked such that $A$ is regular.
	
	Letting $t = |E|$, define the matrix $A_{\<i_1,\dotsc,i_j\>}$ for $1\leq i_1<i_2<\dotsb<i_j\leq t$ the $j\times j$ sub-matrix of $A$ obtained by taking the $i_1, \dotsc,i_j$'th rows and columns
	of $A$. By induction on $j$, we will show that the $\tau_{s,i_t}$ can be picked such that $A_{\<i_1,\dotsc,i_j\>}$ is regular. It will suffice to obtain for each row products
	of the form $\prod_{r=1}^{2^q}x_r^sy_r^{q-s}$, not necessarily $x+y=1$, nor even $x,y<1$,
	as we can add a factor $(x_r+y_r)^q$ for each $r\in\{1,\dotsc,2^q\}$ to the entry in $A$ to 
	obtain the required form.
	
	For $j = 1$, this is trivial: just pick each $\tau_s$ to be neither $0$ nor $1$. So suppose
	$j = k + 1$ for some $k\geq 1$. The polynomial $\prod_{r=1}^{2^q}x_r^{s_r}y_r^{q-s_r}$ takes its maximum value on $\DD{2^{q+1}}$ at $x_r = s_r/2^{q+1}$, $y_r = q-s_r/2^{q+1}$, $r\in\{1,\dotsc,2^q\}$. Considering the previously fixed enumeration of $E$, there exists
	some $e$ such that
	\begin{gather*}
		\prod_{r=1}^{2^q}(s_{r,e}2^{-q-1})^{s_{r,e}}((q-s_{r,e})2^{-q-1})^{q-s_{r,e}} > \prod_{r=1}^{2^q}(s_{r,e}2^{-q-1})^{s_{r,f}}((q-s_{r,e})2^{-q-1})^{q-s_{r,f}}
	\end{gather*}
	for any $f\neq e$. For if not, then for some $f\neq e$ we have
	\begin{align*}
		\prod_{r=1}^{2^q}(s_{r,e}2^{-q-1})^{s_{r,e}}((q-s_{r,e})2^{-q-1})^{q-s_{r,e}} &\leq \prod_{r=1}^{2^q}(s_{r,e}2^{-q-1})^{s_{r,f}}((q-s_{r,e})2^{-q-1})^{q-s_{r,f}} \\
		&< \prod_{r=1}^{2^q}(s_{r,f}2^{-q-1})^{s_{r,f}}((q-s_{r,f})2^{-q-1})^{q-s_{r,f}}
	\end{align*}
	and continuing in this way, we arrive at a contradiction.
	
	By the inductive hypothesis there are choices for the $x_{r,i_m}, y_{r,i_m}$ for $i_m\in\{i_1,\dotsc,i_j\}\setminus\{i_e\}$ such that the
	sub-matrix $A_{\<i_1,\dotsc,i_{e-1},i_{e+1},\dotsc,i_j\>}$ is regular.
	Treating the $x_r, y_r$ as unknowns for the moment we obtain for the determinant of $A_{\<i_1,\dotsc,i_j\>}$ an expression of the form
	\begin{multline}
		\det(A_{\<i_1,\dotsc,i_j\>}) = \\
		\pm \prod_{r=1}^{2^q}x_r^{s_{r,e}}y_r^{q-s_{r,e}}\cdot\det(A_{\<i_1,\dotsc,i_{e-1},i_{e+1},\dotsc,i_j\>}) + \sum_{f\in\{i_1,\dotsc,i_j\}\setminus\{i_e\}}\prod_{r=1}^{2^q}x_r^{s_{r,f}}y_r^{q-s_{r,f}}\cdot(\pm\det(A_f)),\label{eqsubdet}
	\end{multline}
	for some choices of $\pm$, where the $A_f$ are the corresponding sub-matrices of $A_{\<i_1,\dotsc,i_j\>}$. Picking $x_r = (s_{r,e}2^{-q-1})^g$, $y_r = ((q-s_{r,e})2^{-q-1})^g$
	for large enough $g>0$ the dominant term of \eqref{eqsubdet} becomes
	\begin{gather*}
		\prod_{r=1}^{2^q}x_r^{s_{r,e}}y_r^{q-s_{r,e}}\cdot\det(A_{\<i_1,\dotsc,i_{e-1},i_{e+1},\dotsc,i_j\>}),
	\end{gather*}
	and as for this choice of $x_r, y_r$ we clearly have $\prod_{r=1}^{2^q}x_r^{s_{r,e}}y_r^{q-s_{r,e}}>0$ and by the inductive hypothesis $\det(A_{\<i_1,\dotsc,i_{e-1},i_{e+1},\dotsc,i_j\>}) \neq 0$, we
	get that $\det(A_{\<i_1,\dotsc,i_j\>}\neq 0$. Now introducing factors $\prod_{r=1}^{2^q}(x_r+y_r)^q$ as necessary, we obtain that $A$ is a regular matrix with
	the entries having the required form.
	
	As $A$ is (can be picked to be) regular, we obtain from \eqref{eqMatrix1}
	\begin{gather}\label{eqMatrix2}
		A^{-1}\cdot\begin{pmatrix}
			\vdots\\
			v^{\overp,\overtau_e}_{2^q}\\
			\vdots
		\end{pmatrix}
		= 
		\begin{pmatrix}
			\vdots\\
			S_e\cdot z_{e(\overp)}\\
			\vdots
		\end{pmatrix}
	\end{gather}
	and we can now obtain $z_{\vec{x}} = z_{e(\overp)}$ as a difference of functions $v^{\overp,\overtau_e}_{2^q}$:
	we have
	\begin{align}
		z_{\vec{x}} &= \frac{1}{S_e}\cdot\sum_{e\in E}b_e\cdot v^{\overp,\overtau_e}_{2^q}\nonumber
		\intertext{and collecting the $v^{\overp,\overtau_e}_{2^q}$ with positive coefficients in one term, we obtain}
		z_{\vec{x}} &= \gamma\cdot w_1 - \lambda\cdot w_2,\label{eqzDiff1}		
	\end{align}
	where $w_1$ is a probability function consisting of the $v^{\overp,\overtau_e}_{2^q}$ with positive coefficients and $w_2$ a probability function consisting of the $v^{\overp,\overtau_e}_{2^q}$ with negative coefficients.
	
	Since $z_{\vec{x}}(\top) = w_1(\top) = w_2(\top) = 1$, we must have
	$\gamma - \lambda = 1$, and thus we obtain $\gamma = 1 + \lambda$. As furthermore
	$w_1$, $w_2$ are convex combinations of probability functions satisfying ULi with SPx, we
	have the desired representation for $z_{\vec{x}}$. Note that $\lambda$ depends only on
	$2^q$ and the entries of $A^{-1}$, which in turn only depend on the $\overtau_e$. As $z_{\vec{x}}$ only determines $\overp$, we obtain a uniform $\lambda$ for all $z_{\vec{x}}$.
	
	Now returning to the general case of $w$ of the form \eqref{eqRepSpx}, we obtain
	\begin{align*}
		w &= \int_{\vec{x}} z_{\vec{x}}\,d\mu(\vec{x})\\
		&= \int_{\vec{x}} (1+\lambda)w_{1,\vec{x}} - \lambda w_{2,\vec{x}}\,d\mu(\vec{x})\\
		&= (1+\lambda)\int_{\vec{x}} w_{1,\vec{x}}\,d\mu(\vec{x}) - \lambda\int_{\vec{x}} w_{2,\vec{x}}\,d\mu(\vec{x})\\
		&= (1+\lambda)w_1 - \lambda w_2
	\end{align*}
	for some probability functions $w_1, w_2$ satisfying ULi with SPx, as they are convex combinations of functions satisfying these principles.
\end{proof}

\section{Conclusion}

In this paper we have introduced the principle of Strong Predicate Exchangeability, for which
we have provided two de Finetti -- style representation theorems. While the principle arose
rather by chance, and is more indirectly justified as a rational principle for rational agents,
the representation theorems presented in this paper fit nicely into the story of Pure Inductive Logic so far.

Comparing the representation theorems provided in this paper with similar results for Unary
Language Invariance it appears there is a recurring theme: Looking at presently known results the building blocks
for representing functions satisfying ULi all share the property of Weak Irrelevance.
At the same time, there exist representation theorems for general probability functions satsifying $\Pcal$ showing these
to be \emph{differences} of functions that satisfy ULi with $\Pcal$.

As these principles have so far been based on a symmetry based on the language involved, one might be inclined to expect the same behaviour for other principles based on symmetry, at least where it
concerns purely unary languages. In terms of polyadic languages, there exists a similar result for
Spectrum Exchangeability (generalizing Ax). As the building blocks for the principle SPx discussed in this paper are quite similar to the (unary) $u^{\overp}$ functions, whose polyadic versions play
a role in the representation theorem for Spectrum Exchangeability, one might expect a similar result
for a polyadic version of Px, SPx.


\begin{thebibliography}{99}

\bibitem{CarnapAxSys} Carnap, R., An Axiom System for Inductive Logic. In P.A. Schilpp, editor, \emph{The Philosophy of Rudolf Carnap}, pages 973--979. Open Court Publishing Company, La Salle, Ill., 1963.

	\bibitem{Cutland} Cutland, N.J.,
	Loeb measure theory, in {\it Developments in Nonstandard Mathematics},
	Eds. N.J.Cutland, F.Oliveira, V.Neves,
	J.Sousa-Pinto, Pitman Research Notes in Mathematics Series Vol. 336, Longman, Harlow, 1995, pp151-177.

	

	\bibitem{Gaifman}
	Gaifman, H., Concerning measures on first order calculi,
	{\it Israel Journal of Mathematics}, 1964, {\bf 2}:1-18.

	
\bibitem{KliessThesis}
	Klie{\ss}, Malte S.,
	{\it The Principle of Predicate Exchangeability in Pure Inductive Logic},
	PhD Thesis, The University of Manchester, Manchester, UK, 2014. Available at
	{\tt https://www.escholar.manchester.ac.uk/uk-ac-man-scw:219317}

\bibitem{KliessParis1}
	Klie{\ss}, M.S. and Paris, J.B.,
	Predicate Exchangeability and Language Invariance in Pure Inductive Logic,
	{\it Loqique \& Analyse}, 2014, {\bf 228}:513-540.

	\bibitem{ParisVencovskaBook}
	Paris, J.B. and Vencovsk\'{a}, A.,
	{\it Pure Inductive Logic},
	 Perspectives in Logic, Cambridge University Press, 2015.

\end{thebibliography}
\end{document}